\font\smallit=cmti10
\font\smalltt=cmtt10
\renewcommand\section{\@startsection {section}{1}{\z@}
{-30pt \@plus -1ex \@minus -.2ex}
{2.3ex \@plus.2ex}
{\normalfont\normalsize\bfseries}}
\renewcommand\subsection{\@startsection{subsection}{2}{\z@}
{-3.25ex\@plus -1ex \@minus -.2ex}
{1.5ex \@plus .2ex}
{\normalfont\normalsize\bfseries}}
\renewcommand{\@seccntformat}[1]{\csname the#1\endcsname. }
\newtheorem{theorem}{Theorem}
\newtheorem{lemma}{Lemma}
\newtheorem{conjecture}{Conjecture}
\newtheorem{proposition}{Proposition}
\newtheorem{corollary}{Corollary}
\newtheorem{exm}{Example}[section]
\newtheorem{defn}{Definition}[section]
\begin{document}

\begin{center}
\uppercase{\bf  A window to the Convergence of a Collatz Sequence}
\vskip 20pt
{Maya Mohsin Ahmed}\\
{maya.ahmed@gmail.com}
\end{center}
\vskip 30pt
\centerline{\smallit Accepted: } 
\vskip 30pt

\centerline{\bf Abstract}
\noindent

In this article, we reduce the unsolved problem of convergence of
 Collatz sequences to convergence of Collatz sequences of odd numbers that
 are divisible by 3. We give an elementary proof of the fact that a
 Collatz sequence does not increase monotonically. We define a unique
 reverse Collatz sequence and conjecture that this sequence always
 converges to a multiple of $3$.

\pagestyle{myheadings} 
\markright{\smalltt INTEGERS: 14 (2014)\hfill} 
\thispagestyle{empty} 
\baselineskip=12.875pt 
\vskip 30pt

\section{Introduction} \label{intro}

The Collatz conjecture is an unsolved conjecture about the convergence
of a sequence named after Lothar Collatz.  A comprehensive study of
the Collatz conjecture can be found in \cite{lag1}, \cite{lag2},
and \cite{wir}. 

Consider the following operation  on a positive integer $A$.
If $A$ is even, divide it  by two. Otherwise, if $A$ is odd,
triple  it  and add  one.  Now,  form  a sequence  $c_i$  by
performing  this operation  repeatedly, beginning  with $A$,
and taking the result at each step as the input of the next:
\begin{align*}
c_i &= A,  \mbox{ if $i=0$;} \\ &= 3c_{i-1}+1,  \mbox{ if
  $c_{i-1}$ is odd;} \\ & =  c_{i-1}/2,  \mbox{ if $c_{i-1}$ is even.}
\end{align*} 
The sequence $c_i$ is called a {\em Collatz sequence} with {\em
  starting number} $A$. The {\em Collatz conjecture} says that this
sequence will eventually reach the number 1, regardless of which
positive integer is chosen initially. The sequence gets in to an
infinite cycle of 4, 2, 1 after reaching 1.

\begin{exm} \label{Collatzeg} {\em 
The Collatz sequence of $27$ is
\begin{gather*}
27, 82, 41, 124, 62, 31, 94, 47, 142, 71, 214, 107, 322, 161, 484,
242, 121, 364, 182, 91, 274, \\ 137, 412, 206, 103, 310, 155, 466,
233, 700, 350, 175, 526, 263, 790, 395, 1186, 593, 1780, \\ 890, 445,
 1336, 668, 334, 167, 502, 251, 754, 377, 1132, 566, 283, 850, 425,
1276, 638, 319, \\ 958, 479, 1438,  719, 2158, 1079, 3238, 1619, 4858,
2429, 7288, 3644, 1822, 911, 2734, 1367, \\ 4102, 2051, 6154,  3077, 9232,
4616, 2308, 1154, 577, 1732, 866, 433, 1300, 650, 325, 976, \\ 488, 244,
122, 61, 184, 92,  46, 23, 70, 35, 106, 53, 160, 80, 40, 20, 10, 5, 16,
8, 4, 2, 1,4,2,1,\\ 4,2,1, \dots
\end{gather*}
} \end{exm}
\noindent For the rest of this article, we will ignore the infinite cycle of
$4,2,1$, and say that a Collatz sequence {\em converges to $1$}, if it
reaches $1$. 

 In this article, we focus on the subsequence of odd numbers of a
 Collatz sequence. This is because every even number in a Collatz
 sequence has to reach an odd number after a finite number of steps.
 Observe that the Collatz conjecture implies that the subsequence of
 odd numbers of a Collatz sequence converges to $1$.
\begin{exm} \label{subseqodd27eg} {\em The subsequence of odd numbers of the Collatz sequence of $27$ in Example \ref{Collatzeg} is
\begin{gather*}
27, 41, 31, 47, 71, 107, 161, 121, 91, 137, 103, 155, 233, 175, 263,
395, 593, 445, 167, 251,  377, \\ 283,  425, 319, 479, 719, 1079, 1619,
2429, 911, 1367, 2051, 3077, 577, 433, 325, 61, 23, 35, 53, \\ 5, 1, 1,
\dots
\end{gather*} } \end{exm} 
This article is organized as follows.  In Section
\ref{equivconsedoddsection}, we show that to prove the Collatz
conjecture, it is sufficient to show that the Collatz sequences of
numbers that are multiples of $3$ converge. In Section
\ref{riseandfallsection}, we prove that a Collatz sequence cannot
increase monotonically. Finally, in Section
\ref{reversecollatzsection}, we define and discuss a unique reverse
Collatz sequence.

\section{Reducing the Collatz conjecture to multiples of 3.} \label{equivconsedoddsection}

In this section we show that to prove the Collatz conjecture, it is
sufficient to show that the Collatz sequences of numbers that are
multiples of $3$ converge to $1$.

Observe that a Collatz sequence of a number is unique.  However, a
Collatz sequence can be a subsequence of another Collatz sequence.  We
say two Collatz sequences are {\em equivalent} if the second odd
number occurring in the sequences are same. Consequently, two
equivalent Collatz sequences become equal from the second odd term
onwards.

\begin{exm} {\em
The Collatz sequence of 3 is
\[
3, 10, 5, 16, 8, 4, 2, 1, 1, \dots \]
The Collatz sequence of 13 is
\[
13, 40, 20, 10, 5, 16, 8, 4, 2, 1, 1, \dots 
\]
Observe that the two sequences merge at the odd number $5$.  Hence the
Collatz sequences of $3$ and $13$ are equivalent.  }\end{exm}

Let $e_n$ denote the recursive sequence $e_n = 4e_{n-1}+1$, such that,
 $n=0,1,2, \dots$ and $e_{-1}=0$. Observe that $e_n = \sum_{i=0}^n 4^i,
n \geq 0$.  Thus, $e_n$ is the sequence
\begin{gather*}
1, 5, 21, 85, 341, 1365, 5461, 21845, 87381, 349525, 1398101,  5592405, \dots  
\end{gather*}
These numbers play a pivotal role in the study of a Collatz
sequence.

\begin{lemma} \label{4n4n-1lemma}
For $n \geq 0$,
\[ 4^n = 3e_{n-1} + 1.
\]
\end{lemma}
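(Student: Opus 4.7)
The identity $4^n = 3e_{n-1} + 1$ is essentially the closed-form expression for a finite geometric series in disguise, so the plan is to prove it either by induction on $n$ or by directly invoking the geometric-sum formula. Both routes are short; I would lead with induction because it only uses the defining recurrence $e_n = 4e_{n-1}+1$ and the stated initial value $e_{-1}=0$, whereas the closed-form route quietly re-derives the same identity.

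First I would dispose of the base case $n=0$: by definition $e_{-1}=0$, so $3e_{-1}+1 = 1 = 4^0$. Then for the inductive step I would assume $4^n = 3e_{n-1}+1$ and multiply both sides by $4$ to obtain
\[
4^{n+1} = 4\bigl(3e_{n-1}+1\bigr) = 12e_{n-1}+4 = 3\bigl(4e_{n-1}+1\bigr)+1.
\]
Applying the recurrence $4e_{n-1}+1 = e_n$ collapses the right-hand side to $3e_n+1$, which is exactly the claim with $n$ replaced by $n+1$.

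As a sanity check (and an alternative presentation) I would note that the paper already remarks $e_{n-1}=\sum_{i=0}^{n-1} 4^i$, so summing the finite geometric series gives $e_{n-1}=(4^n-1)/3$, whence $3e_{n-1}+1=4^n$ directly. Either derivation is a one-liner, and there is no real obstacle; the only thing to watch is the indexing at $n=0$, making sure the convention $e_{-1}=0$ is used so that the base case actually lands on $4^0=1$ rather than something off by one.
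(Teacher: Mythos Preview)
Your proposal is correct and follows essentially the same approach as the paper: induction on $n$ with base case $n=0$ (using $e_{-1}=0$) and an inductive step that multiplies by $4$ and applies the recurrence $e_n=4e_{n-1}+1$. The geometric-series remark you add is a nice sanity check but is not in the paper's proof.
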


\begin{proof}
 We use induction to  prove  this lemma. For $n=0$,
\[
1= 4^0  = 3e_{-1}+1 = 1.
\]
Hence the result is true for $n=0$. Assume the hypothesis is true for
$n=k$, that is $4^k = 3e_{k-1}+1$. Then for $n=k+1$, we get
\[  
4^{k+1} = 4 (4^k) = 4(3e_{k-1}+1) = 3 \times 4e_{k-1} + 3 + 1 = 3(4e_{k-1}+1)+1 =
3e_k+1.
\]
Therefore by induction the result is true for all $n$.
\end{proof}

Let $P$ be a positive number. We say $A$ is a {\em jump} of height
$n$ from $P$ if $A = 4^nP+e_{n-1}$.

\begin{exm} {\em
$13 = 4 \times 3 + 1$ is a jump from $3$ of height $1$. $53 = 4 \times
    13 +1 = 4^2 \times 3 +4 + 1$ is a jump from $13$ of height $1$ and
    a jump from $3$ of height $2$.  }
\end{exm}

In the next lemma, we consider a sequence of jumps from a given
number.

\begin{lemma} \label{bnthm}
Let $b_0$ be any positive integer. Consider the sequence $ b_i = 4b_{i-1}+1$.
For any positive integer $n$, $ 3b_n+1 = 4^n(3b_0+1)$.
\end{lemma}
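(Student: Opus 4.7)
The plan is to prove the identity $3b_n + 1 = 4^n(3b_0+1)$ by induction on $n$, since the recurrence $b_i = 4b_{i-1}+1$ is itself a one-step relation and the claimed identity has a clean multiplicative structure in $n$.

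For the base case $n=1$, I would simply compute $3b_1 + 1 = 3(4b_0+1) + 1 = 12b_0 + 4 = 4(3b_0+1)$, which is $4^1(3b_0+1)$ as required. For the inductive step, assuming $3b_k + 1 = 4^k(3b_0+1)$, I would write
\begin{align*}
3b_{k+1} + 1 &= 3(4b_k + 1) + 1 \\
&= 4(3b_k + 1) \\
&= 4 \cdot 4^k (3b_0+1) \\
&= 4^{k+1}(3b_0+1),
\end{align*}
using the recurrence in the first line, regrouping in the second, and the induction hypothesis in the third. This closes the induction.

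There is essentially no real obstacle here; the crux is just noticing the convenient identity $3(4x+1)+1 = 4(3x+1)$, which makes each step of the recurrence multiply $3b_i+1$ by exactly $4$. An alternative route would be to first solve the linear recurrence explicitly as $b_n = 4^n b_0 + e_{n-1}$ and then invoke Lemma~\ref{4n4n-1lemma} to rewrite $3e_{n-1}+1 = 4^n$, but the direct induction above is shorter and avoids bringing in $e_{n-1}$ at all.
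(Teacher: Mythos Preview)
Your induction is correct and arguably cleaner than the paper's argument. The paper instead unrolls the recurrence to obtain the closed form $b_n = 4^n b_0 + e_{n-1}$, then computes $3b_n+1 = 3\cdot 4^n b_0 + (3e_{n-1}+1)$ and invokes Lemma~\ref{4n4n-1lemma} ($3e_{n-1}+1 = 4^n$) to factor out $4^n$. In other words, the paper takes exactly the ``alternative route'' you mention at the end. Your direct induction via the identity $3(4x+1)+1 = 4(3x+1)$ is shorter and self-contained; the paper's version has the minor side benefit of making the closed form $b_n = 4^n b_0 + e_{n-1}$ explicit, which it reuses later (e.g., in the ``jump'' language of Theorem~\ref{consecuoddthm}).
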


\begin{proof}

\begin{align*}
b_1 &= 4b_0 + 1 \\
b_2 &= 4(4b_0+1) + 1 = 4^2b_0 + 4 + 1 \\
b_3 &= 4(4(4b_0+1) + 1)+1 = 4^3b_0 + 4^2 + 4+1 \\
&\mathrel{\makebox[\widthof{=}]{\vdots}} 
\end{align*}
Thus, we get $b_n = 4^nb_0 + e_{n-1}$. Therefore
\begin{align*}
3b_n + 1 & =   3(4^nb_0 + e_{n-1})+1 \\
& =  3\times4^nb_0 +(3e_{n-1}+1) \\ 
&=  3\times4^nb_0  + 4^n   \hfill \mbox{ (by Lemma \ref{4n4n-1lemma}, } 3e_{n-1}+1 = 4^n)\\
&= 4^n(3b_0+1). \end{align*} \end{proof}

In the next theorem we show that the set of odd numbers that can occur
consecutively before a given odd number $B$ in the odd subsequence of
a Collatz sequence are all jumps from a unique number derived from $B$.

\begin{theorem}  \label{consecuoddthm} 

Let $B$ be an odd number.

\begin{enumerate}
\item \label{nooddpartomo3} If $B \equiv 0$ mod $3$, then there are no
  odd numbers before $B$ in a Collatz sequence containing $B$.
\item  Let
\begin{ceqn}
\begin{equation} \label{thepeqn} P = \left \{
\begin{array}{ll}  \frac{4B-1}{3} & \mbox{if } B \equiv 1 \mbox{ mod } 3 \mbox{ 
and }\\ \\\frac{2B-1}{3} & \mbox{if } B \equiv 2 \mbox{ mod } 3. 
\end{array}
\right .
\end{equation}
\end{ceqn}

Let $A$ be an odd number. Then $A$ occurs before $B$, consecutively,
in the odd subsequence of a Collatz sequence containing $B$, if and
only if, $A$ is a jump of $P$ of height $n \geq 0$.  That is, $A \in
\{4^nP + e_{n-1}; n \geq 0 \}$. Thus, $P$ is the smallest odd integer
that can occur consecutively before $B$ in the odd subsequence of a
Collatz sequence containing $B$.

\end{enumerate}
\end{theorem}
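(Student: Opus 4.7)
The plan is to express the relationship between consecutive odd terms of a Collatz sequence algebraically and then reduce everything to a congruence modulo $3$. Suppose an odd number $A$ is immediately followed in the odd subsequence by the odd number $B$. Then the Collatz rule applied to $A$ yields $3A+1$, and halving repeatedly until we first hit an odd number must give $B$, so $3A+1 = 2^{k}B$ for some integer $k \geq 1$. Solving gives $A = (2^{k}B - 1)/3$; whenever this is an integer it is automatically odd, since $2^{k}B$ is even for $k \geq 1$, making $2^{k}B-1$ odd.

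For part 1, if $B \equiv 0 \pmod{3}$ then $2^{k}B \equiv 0 \pmod{3}$, so $2^{k}B - 1 \not\equiv 0 \pmod{3}$, and no integer $A$ can arise. Hence $B$ has no odd predecessor, giving statement~\ref{nooddpartomo3}.

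For part 2, I would use $2 \equiv -1 \pmod{3}$ to rewrite the condition $3 \mid 2^{k}B - 1$ as $(-1)^{k} B \equiv 1 \pmod{3}$. If $B \equiv 1 \pmod{3}$, this forces $k$ even; writing $k = 2(n+1)$ for $n \geq 0$ yields $A = (4^{n+1}B - 1)/3$, which at $n=0$ recovers $A = (4B-1)/3 = P$. If $B \equiv 2 \pmod{3}$, $k$ must be odd; writing $k = 2n+1$ yields $A = (2 \cdot 4^{n} B - 1)/3$, which at $n=0$ recovers $A = (2B-1)/3 = P$. Thus in either residue class the valid predecessors are enumerated by $n \geq 0$.

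To match these predecessors to the set $\{4^{n}P + e_{n-1} : n \geq 0\}$, I would apply Lemma~\ref{bnthm} to the sequence $b_0 = P$, $b_i = 4b_{i-1}+1$. The lemma supplies both the closed form $b_n = 4^{n}P + e_{n-1}$ and the identity $3b_n + 1 = 4^{n}(3P+1)$. In case $B \equiv 1 \pmod{3}$, $3P+1 = 4B$, so $b_n = (4^{n+1}B - 1)/3$; in case $B \equiv 2 \pmod{3}$, $3P+1 = 2B$, so $b_n = (2 \cdot 4^{n} B - 1)/3$. In both cases $b_n$ matches the $A$ computed above for index $n$, establishing the claimed bijection, and monotonicity in $n$ identifies $P$ as the smallest such predecessor. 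There is no substantive obstacle; the only care needed is verifying both directions of the biconditional — forward by parametrizing predecessors via $k = v_2(3A+1)$ and checking that the allowable parities of $k$ are exactly those enumerated, and backward by confirming each $b_n$ does satisfy $3b_n + 1 = 2^{k}B$ for a valid $k \geq 1$, which is automatic from Lemma~\ref{bnthm}.
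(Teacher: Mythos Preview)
Your proposal is correct and follows essentially the same route as the paper: write $A=(2^{k}B-1)/3$, analyze the parity of $k$ via the residue of $B$ modulo $3$, identify $P$ as the $k$-minimal case, and invoke Lemma~\ref{bnthm} to match the remaining predecessors with the jumps $4^{n}P+e_{n-1}$. Your use of $2\equiv -1 \pmod{3}$ to read off the parity of $k$ is a cosmetic streamlining of the paper's case-by-case check, and you lean on Lemma~\ref{bnthm} for both directions whereas the paper does one direction by an explicit computation of $4^{r}P+e_{r-1}$, but the underlying argument is the same.
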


\begin{proof}  \hfill

\begin{enumerate}

\item If there is an odd number before $B$ in a Collatz sequence, then
  there is some odd number $A$, consecutively, before $B$. Then by the
  definition of a Collatz sequence, $B$ can be written as
  $B=(3A+1)/2^r$ for some $r \geq 1$. Consequently, if $B \equiv 0$
  mod $3$, then $3A+1 \equiv 0$ mod $3$, which is impossible. Hence
  there are no odd numbers before $B$ in any Collatz sequence.

\item Since $A$ is an odd number such that $A$ and $B$ occur
  consecutively  in   the  odd  subsequence   of  a  Collatz
  sequence,  by definition  of  the Collatz  sequence, $A  =
  (2^jB-1)/3$ for  some $j \geq 1$. Consequently,  $j$ is an
  integer  such  that
\begin{ceqn}
 \begin{equation}  \label{conditioneqn}
    2^jB \equiv  1 \mbox{ mod  } 3. \end{equation} \end{ceqn}  
Since we  assume $A$  exists, $B \not  \equiv 0$ mod  $3$ by
part (\ref{nooddpartomo3}). First  consider $B \equiv 2$ mod
$3$. Since $j  \geq 1$ and since $2B \equiv  1$ mod $3$, for
(\ref{conditioneqn}) to  hold $j=2r+1$ for some  $r \geq 0$.
On the  other hand if  $B \equiv 1$  mod $3$, then  $2B \not
\equiv 1$  mod $3$, but  $4 B \equiv  1$ mod $3$.  Thus, for
(\ref{conditioneqn}) to hold, $j=2r+2$ for some $r \geq 0$.

Thus we conclude that for some $r \geq 0$,
\begin{equation} \nonumber A = \left \{
\begin{array}{ll}  \frac{4^{r+1}B-1}{3} & \mbox{if } B \equiv 1 \mbox{ mod } 3, \\ \\\frac{4^r \times 2B-1}{3} & \mbox{if } B \equiv 2 \mbox{ mod } 3. 
\end{array}
\right .
\end{equation}

We substitute $r=0$ in the above equation to get $P$.  

We now prove that $A$ is a jump from $P$ of height $r$. Since $P =
(2^k B -1)/3$ such that $k$ is either $1$ or $2$ depending on $B$,
\begin{align*}
4^rP+e_{r-1} &= 4^r \frac{2^kB-1}{3} + e_{r-1} \\
& =  \frac{4^r \times 2^kB - 4^r + 3e_{r-1}}{3} \\
&= \frac{4^r \times 2^kB - 4^r +  (3e_{r-1}+1) -1}{3}
 \mbox{  (adding and subtracting $1$) } \\ 
& =  \frac{4^r \times 2^kB - 4^r + 4^r -1}{3} \mbox{ (since $3e_{r-1}+1 = 4^r$)} \\
& =  \frac{4^r \times 2^kB -1}{3}  \\
&= A.
\end{align*}

Hence $A$ is a jump of height $r$ from $P$.

Now we prove the other direction, that is, if $A$ is a jump from $P$
of height $r$ for some $r \geq 0$, then $A$ occurs consecutively
before $B$ in the odd subsequence of a Collatz sequence.

Since $A = 4^r P + e_{r-1}$, we apply Lemma \ref{bnthm} with $b_0=P$ and
$A = b_r$ to get
\begin{ceqn}
\begin{align} \label{3bip1ean}
3A+1 &= 3b_r + 1 = 4^r(3b_0 + 1) = 4^r(3P + 1). 
\end{align}
\end{ceqn}
 Since $P = (2^k B -1)/3$ such that $k$ is either $1$ or $2$ depending
 on $B$, 
\begin{ceqn}
\begin{align} \label{3ap1ean}
(3P+1) &= 2^k B.
\end{align}
\end{ceqn}
From  (\ref{3bip1ean}) and (\ref{3ap1ean}), we get $3A + 1= 4^r
\times 2^k B$. Thus the odd number following $A$ in a Collatz
sequence is $(3A+1)/(4^r \times 2^k) = B$.
\end{enumerate} \end{proof}

\begin{exm}  \label{333exm} {\em
Consider the odd number $B=23$. Then by Theorem \ref{consecuoddthm}, 
\[
P = \frac{2 \times 23 -1}{3} = 15. 
\]
The  unique sequence of  odd numbers  that can  occur before
$23$ in a Collatz sequence is $b_i = 4b_{i-1}+1$, where $b_0
=  15$. The  Collatz sequences  of $b_0=15,  b_1 =  61$, and
$b_3=3925$ are given below (check  that the second odd number
in all these sequences is 23):
\begin{gather*}
15, 46, \underline{23}, 70, 35,
106, 53, 160, 80, 40, 20, 10, 5, 16, 8, 4, 2, 1;
\\ \\
61, 184, 92, 46, \underline{23}, 70, 35,
106, 53, 160, 80, 40, 20, 10, 5, 16, 8, 4, 2, 1;
\\ \\
3925, 11776, 5888, 2944, 1472, 736, 368, 184, 92, 46,
\underline{23}, 70, 35, 106, 53, 160, 80, 40, 20,\\ 10, 5, 16, 8, 4, 2,
1.
\end{gather*}
}
\end{exm}

\begin{corollary}
Let $B$ be an integer and let $B > 1$. If the Collatz sequence of $B$
converges then either $B=2^r$ for some $r \geq 1$ or the sequence
reaches $e_n$ for some $n \geq 1$.
\end{corollary}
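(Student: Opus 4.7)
The plan is to apply Theorem \ref{consecuoddthm} with the target odd number chosen to be $1$ itself, so as to identify the odd numbers that can appear immediately before $1$ in the odd subsequence of a Collatz sequence. Since $1 \equiv 1$ mod $3$, the formula yields $P = (4 \cdot 1 - 1)/3 = 1$, and the admissible predecessors are the jumps of height $n \geq 0$ from $P = 1$, namely the set $\{4^n + e_{n-1} : n \geq 0\}$. A short computation using $e_n = \sum_{i=0}^{n} 4^i$ shows $4^n + e_{n-1} = e_n$, so this set is exactly $\{e_n : n \geq 0\}$. Thus the sequence $e_n$ arises naturally and for free as the list of odd numbers whose Collatz continuation reaches $1$ in a single odd step.

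With that identification in hand, I would split on whether the Collatz sequence of $B$ contains an odd term other than $1$. If it does not, then every term before the first occurrence of $1$ is even; writing $B = 2^s m$ with $m$ odd, successive halvings drive $B$ down to $m$, so $m$ appears in the sequence and must equal $1$. Hence $B = 2^s$ with $s = r \geq 1$, which is the first alternative of the corollary.

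Otherwise the odd subsequence of the Collatz sequence of $B$ contains some odd number greater than $1$. Since the sequence enters the $4, 2, 1$ loop after reaching $1$, only finitely many odd terms precede the first occurrence of $1$, so there is a well-defined \emph{last} odd term $A > 1$ in the sequence. This $A$ is consecutively followed by $1$ in the odd subsequence, so by the first paragraph $A = e_n$ for some $n \geq 0$. From $A > 1 = e_0$ we get $n \geq 1$, and the Collatz sequence of $B$ reaches $e_n$ as required.

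The only point requiring care, and hence the main (mild) obstacle, is the setup of the second case: one must verify both that a last odd term greater than $1$ exists and that it is immediately followed by $1$ in the odd subsequence. Both are routine from the definition of a Collatz sequence and the structure of its tail, but making them explicit is what converts Theorem \ref{consecuoddthm} into the desired corollary.
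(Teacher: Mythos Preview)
Your proposal is correct and follows essentially the same approach as the paper: apply Theorem~\ref{consecuoddthm} with $B=1$ to identify the odd predecessors of $1$ as the numbers $e_n$, then case-split on whether the sequence contains an odd term greater than $1$. Your version is slightly more explicit (e.g., verifying $4^n+e_{n-1}=e_n$ and justifying the existence of the last odd term), but the argument is the same.
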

\begin{proof} If $B=2^r$ for some $r \geq 1$ then clearly the Collatz
sequence of $B$ converges to $1$. On the other hand, if $B$ is not a
power of $2$, and since $B \neq 1$, the convergent Collatz sequence of
$B$ contains an odd number greater than $1$. Let $A > 1$ denote the
odd integer that occurs consecutively before $1$ in the Collatz
sequence of $B$. By Theorem \ref{consecuoddthm}, since $1 \equiv 1$
mod $3$, the smallest integer $P$ that can occur before $1$ in a
Collatz sequence is $P=(4\times 1-1)/3 = 1=e_0$. Since $A$ is greater
than $1$, $A$ is a jump from $P$ of height greater than or equal to
$1$.  Observe that $e_n$ is a jump of height $n$ from $1$, by
definition.  Consequently, $A=e_n$ for some $n \geq 1$.  \end{proof}

\begin{corollary}  \label{startingnumbercorollary} \hfill

\begin{enumerate}

\item \label{cruzfpreqiucseqpart} 

Let $B$ be an  odd number that is not divisible by $3$. Let 
\begin{align*} P = \left \{
\begin{array}{ll}  \frac{4B-1}{3} & \mbox{if } B \equiv 1 \mbox{ mod } 3
\\ \\\frac{2B-1}{3} & \mbox{if } B \equiv 2 \mbox{ mod } 3. 
\end{array}
\right . \end{align*}
Let $A$ be an odd number that occurs consecutively before $B$ in the
odd subsequence of a Collatz sequence. Consider the sequence $b_i =
4b_{i-1}+1$ with $b_0=P$.
\begin{enumerate}
\item \label{pseqequivpart} For any $i$, the Collatz sequence of $A$
  and $b_i$ are equivalent.

\item \label{smallp}  The Collatz sequence of $P$ is the sequence with smallest
starting odd number that is equivalent to the Collatz sequence of $A$.

\item \label{ciequivseqpart} Let $c_0=A$ and $c_i = 4c_{i-1}+1$. Then
  for any $i$, the Collatz sequence of $A$ and $c_i$ are equivalent.

\end{enumerate}

\item \label{startwithmultof3part} Let $A$ be an odd number.  The
  Collatz sequence of $A$ is equivalent to a Collatz
  sequence of a  number divisible by $3$.

\end{enumerate}
\end{corollary}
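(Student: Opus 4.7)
The plan is to reduce every part to Theorem~\ref{consecuoddthm}, which characterizes the odd numbers that consecutively precede $B$ in some Collatz sequence as exactly the set of jumps $\{4^nP + e_{n-1} : n \geq 0\}$ from $P$. Recall that two Collatz sequences are equivalent iff they share the same second odd term.

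For (1)(a), I unroll the recursion $b_i = 4b_{i-1}+1$ with $b_0 = P$ (exactly the computation at the start of the proof of Lemma~\ref{bnthm}) to obtain $b_i = 4^iP + e_{i-1}$, so each $b_i$ is the jump from $P$ of height $i$; Theorem~\ref{consecuoddthm} then places $B$ as its second odd term, matching the Collatz sequence of $A$. Part (1)(b) follows immediately: $P = b_0$ is the minimum of the jump set, and any odd starting number whose Collatz sequence is equivalent to that of $A$ must lie in this set. For (1)(c), since $A$ itself is a jump from $P$, write $A = b_k$; because $(b_i)$ and $(c_i)$ satisfy the same recursion and $c_0 = b_k$, induction gives $c_i = b_{k+i}$, so each $c_i$ is again a jump from $P$ and (1)(a) applies.

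For (2), if $3 \mid A$, take the witness to be $A$ itself. Otherwise let $B$ denote the second odd term of the Collatz sequence of $A$; since $A$ precedes $B$, Theorem~\ref{consecuoddthm}(1) forces $3 \nmid B$, so the associated $P$ is well-defined. I want to exhibit some jump $M = 4^nP + e_{n-1}$ that is an odd multiple of $3$. Modulo $3$ we have $4 \equiv 1$, so $e_{n-1} = \sum_{j=0}^{n-1} 4^j \equiv n \pmod{3}$, giving $M \equiv P + n \pmod{3}$; picking $n \in \{0,1,2\}$ with $n \equiv -P \pmod{3}$ yields $3 \mid M$. Every jump from $P$ is odd ($P$ is odd, and for $n \geq 1$ the term $4^nP$ is even while $e_{n-1}$ is odd), so this $M$ is an odd multiple of $3$ whose Collatz sequence has $B$ as second odd term by Theorem~\ref{consecuoddthm}, hence is equivalent to the Collatz sequence of $A$.

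The bulk of the argument is done by Theorem~\ref{consecuoddthm}, so parts (1)(a)--(1)(c) are essentially bookkeeping on top of it. The one place that requires a genuinely new calculation is the mod-$3$ argument in part (2), and I expect the only delicacy there to be verifying that the jump selected by the residue condition is actually odd, not merely divisible by $3$; but that reduces to the parities of $P$ and $e_{n-1}$ already noted.
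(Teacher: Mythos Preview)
Your argument is correct and, for parts (1)(a)--(c), matches the paper's proof essentially word for word: both simply read off Theorem~\ref{consecuoddthm} and note that $A=b_r$ makes $(c_i)$ a tail of $(b_i)$.

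For part (2) there is a small but harmless difference worth noting. The paper never introduces $B$ or $P$ here; instead it applies part (1)(c) directly to the jumps from $A$ itself, checking that among $A$, $4A+1$, and $16A+5$ one is divisible by $3$ (namely $4A+1$ when $A\equiv 2$, and $16A+5$ when $A\equiv 1$). Your route---pass to the successor $B$, drop to $P$, then pick a jump $4^nP+e_{n-1}$ with $n\equiv -P\pmod 3$---is the same residue-class pigeonhole, just executed on the full set $\{b_i\}$ rather than its subsequence $\{c_i\}$. The paper's version is one step shorter since it avoids defining $B$ and $P$, but nothing substantive separates the two.
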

\begin{proof} \hfill
\begin{enumerate}

\item Part (\ref{pseqequivpart}) and part (\ref{smallp}) is a direct
  application of Theorem \ref{consecuoddthm}.  Again, by Theorem
  \ref{consecuoddthm}, $A = b_r$ for some $r \geq 0$. Hence $c_i$ is a
  subsequence of $b_i$. Therefore, for any $i$, the Collatz sequence
  of $A$ and $c_i$ are equivalent by part (\ref{pseqequivpart}).

\item The Collatz sequence with starting numbers $A$, $4A+1$, and
  $4(4A+1)+1$ are equivalent by part (\ref{ciequivseqpart}).  If $A
  \equiv 0$ mod $3$, there is nothing to prove. If $A \equiv 2$ mod
  $3$, then $4A+1 \equiv 0$ mod $3$.  On the other hand, if $A \equiv
  1$ mod $3$, then $4(4A+1)+1 \equiv 0$ mod $3$.  Hence the Collatz
  sequence of $A$ is equivalent to a Collatz
  sequence of a  number divisible by $3$.

\end{enumerate}
\end{proof}

\begin{exm} \hfill {\em
\begin{enumerate}

\item The Collatz sequences of $7$ and $a_r=4^r \times 7+e_{r-1}$, $r \geq 0$, are
  equivalent by part (\ref{ciequivseqpart}) of Corollary
  \ref{startingnumbercorollary}.

 The Collatz sequences of $a_0=7, a_1=29$, and $a_2=117$ are given below.
\begin{gather*}
7, 22, 11, 34, 17, 52, 26, 13, 40, 20, 10, 5, 16, 8, 4, 2, 1 \\ 
29, 88, 44, 22, 11, 34, 17, 52, 26, 13, 40, 20, 10, 5, 16, 8,,4, 2, 1 \\ 
117, 352, 176, 88, 44, 22, 11, 34, 17, 52, 26, 13, 40, 20, 10, 5, 16, 8, 4, 2, 1
\end{gather*}

\item Since $11$ comes right after $7$ in the Collatz sequence of $7$, we apply
  part (\ref{smallp}) of Corollary \ref{startingnumbercorollary} to
  derive $P=7$. Hence $7$ is the smallest starting number for this set
  of equivalent Collatz sequences.

\item The Collatz sequence of $7$ is equivalent to the Collatz
  sequence of $117$ which is divisible by $3$.
\end{enumerate}
}\end{exm} 

Since a multiple of $3$ is always the starting odd number of a Collatz
sequence, and every Collatz sequence is equivalent to a Collatz
sequence of a multiple of $3$, the convergence problem of Collatz
sequences reduces to the convergence of Collatz sequences of multiples of $3$.

\section{The rise and fall of a  Collatz sequence.} \label{riseandfallsection}

In this  section, we  give an elementary  proof of  the fact
that  a Collatz  sequence does  not end  in  a monotonically
increasing  sequence. For other  approaches to  this result,
see \cite{lag1}.

Let $A$ be an odd
number. Write $A$ in its binary form,
\begin{align*}  \begin{array}{lllllllllll}
A &=& 2^{i_1} + 2^{i_2} + \cdots + 2^{i_m}+ 2^n+2^{n-1} + 2^{n-2} + \cdots + 2^2+ 2+1,\\ 
& & \hfill  \mbox{such that } i_1 > i_2 > \cdots > i_m > n+1.
\end{array}
\end{align*} 
 The {\em tail} of $A$ 
is defined as $2^n+2^{n-1} + 2^{n-2} + \cdots + 2^2+ 2+1$. We call $n$ the {\em
  length of the tail} of $A$. 

\begin{exm}{\em
The tail of $27 = 2^4 + 2^3 + 2 + 1$ is $2+1$ and hence has length $1$,
the tail of $161 = 2^7 + 2^5 + 1$ is $1$ and hence is of length $0$,
and the tail of $31 = 2^4 + 2^3 + 2^2 + 2 + 1$ is the entire number
$2^4 + 2^3 + 2^2 + 2 + 1$ and therefore has length $4$.  }
\end{exm}

The following lemma is used to work with  the numbers written in binary
form in this section.

 \begin{lemma} \label{computationslemma} \hfill

\begin{enumerate}

\item \label{whathappensto2npart}  \begin{align*} \frac{3 \times 2^n}{2} = 2^n + 2^{n-1}. \end{align*} 

\item \label{sumof2npart} Let $0<i<n$,  then
\begin{equation} \label{eqnconsecumiddle}
 \frac{3}{2}(2^n + 2^{n-1} + 2^{n-2} + \cdots + 2^{n-i}) =
 2^{n+1} + (2^{n-1} + 2^{n-2} + \cdots + 2^{n-i+1}) + 2^{n-i-1}.
\nonumber
\end{equation}

\item \label{tailcomputations} Let $n > 0$, then
\begin{equation} \label{tailcompeqn}
\frac{3(2^{n}+2^{n-1} + \cdots + 2^2+2+1)+1}{2} = 2^{n+1}+2^{n-1} +
\cdots + 2^2+2+1. \nonumber
\end{equation}

\end{enumerate}
\end{lemma}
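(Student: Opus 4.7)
The plan is to treat the three statements as elementary binary arithmetic identities and prove each by direct manipulation, using only the rule $2 \cdot 2^k = 2^{k+1}$ and the geometric series identity $1+2+\cdots+2^n = 2^{n+1}-1$.

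For part (\ref{whathappensto2npart}), I would simply write $\tfrac{3 \cdot 2^n}{2} = 3 \cdot 2^{n-1} = (2+1)\cdot 2^{n-1} = 2^n + 2^{n-1}$. This one-line computation is all that is needed, and it also sets up the key rewriting rule for the rest of the lemma.

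For part (\ref{sumof2npart}), the natural approach is to distribute $\tfrac{3}{2}$ across the sum and apply part (\ref{whathappensto2npart}) to each term $2^k$ with $n-i \le k \le n$. This produces
\begin{align*}
\tfrac{3}{2}\sum_{k=n-i}^{n} 2^k \;=\; \sum_{k=n-i}^{n}\bigl(2^k + 2^{k-1}\bigr)
\;=\; 2^n + 2\bigl(2^{n-1}+ \cdots + 2^{n-i}\bigr) + 2^{n-i-1}.
\end{align*}
Using $2(2^{n-1}+ \cdots + 2^{n-i}) = 2^n + 2^{n-1} + \cdots + 2^{n-i+1}$, the leading $2^n$ combines with this $2^n$ to give $2^{n+1}$, and what remains is exactly $2^{n-1} + \cdots + 2^{n-i+1}$ together with the trailing $2^{n-i-1}$, matching the claimed right-hand side. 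The only care required is the bookkeeping of which powers appear once versus twice after doubling.

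For part (\ref{tailcomputations}), I would use the geometric sum formula to collapse the tail: $2^n + 2^{n-1} + \cdots + 2 + 1 = 2^{n+1} - 1$. Then
\begin{align*}
\frac{3(2^{n+1}-1)+1}{2} \;=\; \frac{3\cdot 2^{n+1} - 2}{2} \;=\; 3 \cdot 2^n - 1 \;=\; 2^{n+1} + 2^n - 1,
\end{align*}
and finally rewriting $2^n - 1 = 2^{n-1} + \cdots + 2 + 1$ yields the stated expression. There is no real obstacle in any of the three parts; the lemma is a purely computational preparation for the binary-form arguments in the next section, and the only thing to watch is the indexing in part (\ref{sumof2npart}) when collecting terms after the factor-of-two shift.
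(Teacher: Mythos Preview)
Your proof is correct. Parts (\ref{whathappensto2npart}) and (\ref{sumof2npart}) follow exactly the same route as the paper: distribute $\tfrac{3}{2}$ term by term using part (\ref{whathappensto2npart}), then collect repeated powers. The only difference is in part (\ref{tailcomputations}): you collapse the tail via the closed-form geometric series $2^n+\cdots+1=2^{n+1}-1$ and compute directly, whereas the paper splits off the constant term, writes $\tfrac{3(2^n+\cdots+2)}{2}+2$, and applies part (\ref{sumof2npart}) to the remaining sum. Your route is marginally cleaner and self-contained; the paper's route has the small virtue of making part (\ref{tailcomputations}) a corollary of part (\ref{sumof2npart}), which is how it is actually used downstream. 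Either way the content is the same elementary binary bookkeeping.
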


\begin{proof} \hfill

\begin{enumerate}

\item \begin{align*} 3 \times 2^n =  (2+1)\times2^n  = 2 \times 2^n + 2^n  = 2^{n+1} + 2^n. \end{align*}
Therefore $\frac{3 \times 2^n}{2} = 2^n + 2^{n-1}$.

\item  \begin{align*}
\frac{3}{2}(2^n + 2^{n-1} + 2^{n-2} + \cdots + 2^{n-i})  &= (2^n +
2^{n-1})+ (2^{n-1} + 2^{n-2}) + \cdots \\ & +(2^{n-i+1} +2^{n-i})  +
(2^{n-i}+2^{n-i-1})     \mbox{ (by part  (\ref{whathappensto2npart}))} 
\\ &= 2^n + (2^{n-1}+ 2^{n-1}) +
(2^{n-2}+ 2^{n-2})+ \cdots \\ &  + (2^{n-i}+ 2^{n-i})+2^{n-i-1} 
\mbox{ (rearranging terms)} \\ &= 2^n + (2^n) + (2^{n-1}) \cdots +
(2^{n-i+1})+2^{n-i-1} \\ &= 2^{n+1} + 2^{n-1} \cdots + 2^{n-i+1}+2^{n-i-1}.
\end{align*}

\item \begin{align*}
\frac{3(2^{n}+2^{n-1} + \cdots + 2^2+2+1)+1}{2} & = &
\frac{3(2^{n}+2^{n-1} + \cdots + 2^2+2)+ 3 +1}{2} \\ & = &
\left(\frac{3(2^{n}+2^{n-1} + \cdots + 2^2+2)}{2} \right ) + 2 \\ &
= & (2^{n+1}+2^{n-1} + \cdots + 2^2+1) + 2    \\ && (\mbox{by
  part (\ref{sumof2npart}) }) \\ & = & 2^{n+1}+2^{n-1} + \cdots +
2^2+2+1  \\ & &
\hfill (\mbox{rearranging terms}).
\end{align*}

\end{enumerate} \end{proof}

Let $A$ and $B$ be two odd numbers such that $B$ occurs,
consecutively, after $A$ in a Collatz sequence. If $B > A$, we say
that the Collatz sequence {\em increases} at $A$, otherwise, we say
that the Collatz sequence {\em decreases} at $A$. The next theorem
relates the tail of a number to the increase or decrease of a Collatz
sequence at that number.

\begin{theorem}  \label{tailsthm}
Let $A$ be an odd number and let $n$ denote the length of the tail of
$A$.  Let $a_i$ denote the sequence of odd numbers in the Collatz
sequence of $A$ with $a_0 = A$.
\begin{enumerate}

\item \label{tailis0thmpart} If $n \geq 1$, then  for some 
$i_1>i_2 > \cdots > i_m > n+1$,
\[ a_i = \frac{3a_{i-1}+1}{2} = \frac{3^i}{2^i} (2^{i_1} +
  2^{i_2} + \cdots + 2^{i_m}+2^{n+1}) -1, \mbox{ for } i=1, \dots
  n.
\]
 The length of the tail of $a_i$ is $n-i$. Hence the length of the
 tail of the $n$-th odd number after $A$ is $0$.

\item If $n=0$, then 
\[ a_1  = \frac{3A+1}{2^k}, k \geq 2. \]
\end{enumerate}
 
Moreover, if the Collatz sequence is
decreasing at $A$ then $n=0$ and  if  the Collatz
sequence is increasing at $A$, then  $n \geq 1$.
  
\end{theorem}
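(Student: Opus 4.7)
The plan is to split the argument on whether $n\geq 1$ or $n=0$, and in the former case to exploit the identity $H+2^{n+1}=A+1$ (since the tail equals $2^{n+1}-1$), so that the formula claimed in part~(\ref{tailis0thmpart}) becomes the transparent $a_i=(3/2)^i(A+1)-1$.

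For part~(\ref{tailis0thmpart}) I would induct on $i$. In the base case $i=1$, write $A=H+T$ with $H=2^{i_1}+\cdots+2^{i_m}$ and $T=2^n+2^{n-1}+\cdots+1$. Part~(\ref{tailcomputations}) of Lemma~\ref{computationslemma} gives $(3T+1)/2=2^{n+1}+2^{n-1}+\cdots+1$, while $3H/2$ is an integer divisible by $2^{n+1}$ since $i_m\geq n+2$. Adding, $(3A+1)/2$ is odd with tail of length exactly $n-1$, so $a_1=(3A+1)/2=(3/2)(A+1)-1$. For the inductive step I would assume $a_{i-1}$ has tail length $n-i+1\geq 1$, so $a_{i-1}\equiv 3\pmod 4$ and $3a_{i-1}+1\equiv 2\pmod 4$; hence the next odd Collatz term is $a_i=(3a_{i-1}+1)/2$, and substituting the induction hypothesis yields $a_i=(3/2)^i(A+1)-1$ in one line. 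To pin down the tail length of $a_i$, I would write $A+1=2^{n+1}M$ with $M$ odd; then $a_i+1=3^i\cdot 2^{n+1-i}M$ has $2$-adic valuation exactly $n+1-i$, equivalently $a_i$ ends in precisely $n-i+1$ consecutive ones, i.e., has a tail of length $n-i$.

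Part~(2) is immediate: if $n=0$ then $A\equiv 1\pmod 4$, so $3A+1\equiv 0\pmod 4$ and the next odd term $a_1=(3A+1)/2^k$ forces $k\geq 2$. The closing monotonicity statement follows by comparing with $A$: when $n\geq 1$, $a_1=(3A+1)/2>A$ for every positive $A$, so the sequence increases; when $n=0$ and $A>1$, $a_1\leq(3A+1)/4<A$, so it decreases. The only delicate point in the whole argument is the mod-$4$ step that legitimizes a single halving at each stage of the induction rather than an unknown power of $2$; everything else is algebraic bookkeeping anchored on Lemma~\ref{computationslemma}.
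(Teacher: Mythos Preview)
Your argument is correct and shares the paper's overall skeleton (the mod-$4$ analysis to justify a single halving, Lemma~\ref{computationslemma}(\ref{tailcomputations}) for the base step, and an induction for the closed formula), but you streamline it in two places. First, your observation that $H+2^{n+1}=A+1$ collapses the formula to $a_i=(3/2)^i(A+1)-1$, which the paper never writes; the paper carries the expression $\frac{3^i}{2^i}(2^{i_1}+\cdots+2^{i_m}+2^{n+1})-1$ throughout and runs a separate algebraic induction on it. Second, and more substantively, you read off the tail length of $a_i$ via the $2$-adic valuation $v_2(a_i+1)=n+1-i$ directly from the closed formula, whereas the paper re-applies the explicit binary computation of Lemma~\ref{computationslemma} to each $a_{i-1}$ (``repeat the above argument'') to see that the tail shortens by one at every step. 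Your route packages the tail-length claim and the formula into a single induction and avoids the repeated bit-level bookkeeping; the paper's route keeps the two claims essentially separate and is closer to a hand computation in base~$2$. Both are valid; yours is a bit more conceptual, the paper's a bit more concrete.
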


\begin{proof} \hfill 

\begin{enumerate}

\item 

Since the length of the tail of $A $ is $n \geq 1 $, for some 
\begin{gather*} 
i_1>i_2 > \cdots > i_m > n+1, \\
A = 2^{i_1} + 2^{i_2} + \cdots + 2^{i_m}+ 2^n+2^{n-1} + 2^{n-2} + \cdots + 2^2+ 2+1.
\end{gather*}
Consequently, $A \equiv 3$ mod $4$. Hence $3A+1 \equiv 2$ mod $4$
which implies $a_1=(3A+1)/2$. Since $a_1 > A$ we conclude that the
Collatz sequence is increasing at $A$.  

On the other hand, if $a_1=(3A+1)/2$, then  $A \equiv 3$ mod $4$, which
implies $n \geq 1$. Therefore,  $n \geq 1$ if and only if the Collatz
sequence is increasing at $A$.
  
In the binary form,
\begin{ceqn}
\begin{equation} \label{a1manipulateqn}
a_1 = \frac{3A+1}{2} = \frac{3(2^{i_1} + 2^{i_2} + \cdots + 2^{i_m})}{2} + 
\frac{3(2^{n}+2^{n-1} + \cdots + 2^2+2+1)+1}{2}.
\end{equation}
\end{ceqn}

By part (\ref{whathappensto2npart}) of Lemma  \ref{computationslemma},
\[
 \frac{3}{2}(2^{i_1} + 2^{i_2}  + \cdots + 2^{i_m}) =
2^{i_1} + 2^{i_1-1} + 2^{i_2} +  2^{i_2-1} + \cdots + 2^{i_m} + 2^{i_m-1}.
\]

By part (\ref{tailcomputations}) of Lemma  \ref{computationslemma},
\[
\frac{3(2^{n}+2^{n-1} + \cdots + 2^2+2+1)+1}{2} = 2^{n+1}+2^{n-1} +
\cdots + 2^2+2+1.
\]

Combining the two results, (\ref{a1manipulateqn}) becomes 
\begin{equation*} 
a_1 = 2^{i_1} + 2^{i_1-1}  + \cdots + 2^{i_m} + 2^{i_m-1}+
 2^{n+1}+2^{n-1} +
\cdots + 2^2+2+1.
\end{equation*}

Since $i_m-1 > n$, the tail of $a_1$ is $2^{n-1} + 2^{n-2} + \cdots +
2^2 + 2 + 1$. Thus the length of the tail of $a_1$ is $n-1$.

Since $n-j \geq 1$ for $j < n$, we repeat the above argument and
derive that $a_i = (3a_{i-1}+1)/2$ and the length of the tail of $a_i$
is $n-i$ for $1<i \leq n$. Hence the length of the tail of the $n$-th
odd number that occurs after $A$ in the Collatz sequence is $0$.

We use induction to prove, \[a_i = \frac{3^i}{2^i}
(2^{i_1} + 2^{i_2} + \cdots + 2^{i_m}+2^{i+1}) -1 \mbox{ for } i=1, \dots n. \]

When $i=1$,
\begin{align*} a_1 &=  \frac{3}{2}(2^{i_1} + 2^{i_2} + \cdots + 2^{i_m} + 2^{n}+2^{n-1} + \cdots + 2^2+2)+ 2 & \hfill (\mbox{by (\ref{a1manipulateqn})})\\ &=   \frac{3}{2}(2^{i_1} + 2^{i_2} + \cdots + 2^{i_m} + 2^{n}+2^{n-1} + \cdots + 2^2+2)+ (3-1) \\
 &= \frac{3}{2}(2^{i_1} + 2^{i_2} + \cdots + 2^{i_m} + 2^{n}+2^{n-1}
  + \cdots + 2^2+(2+2)) - 1 \\ &= \frac{3}{2}(2^{i_1} + 2^{i_2} +
  \cdots + 2^{i_m} + 2^{n}+2^{n-1} + \cdots + (2^2+ 2^2)) - 1 \\  &= \frac{3}{2}(2^{i_1} + 2^{i_2} +
  \cdots + 2^{i_m} + 2^{n}+2^{n-1} + \cdots + (2^3+ 2^3)) - 1 \\ &\mathrel{\makebox[\widthof{=}]{\vdots}} \\
 &=
  \frac{3}{2}(2^{i_1} + 2^{i_2} + \cdots + 2^{i_m} + (2^n+2^n))-1 \\
 &=
  \frac{3}{2}(2^{i_1} + 2^{i_2} + \cdots + 2^{i_m} + 2^{n+1})-1.
\end{align*}
Assume $k<n$, $a_k = \frac{3^k}{2^k}(2^{i_1} + 2^{i_2} + \cdots +
2^{i_m} + 2^{n+1})-1$, then
\begin{align*}
a_{k+1} &= \frac{3a_{k-1}+1}{2}\\  &= \frac{1}{2} \left ( 3 \left
( \frac{3^k}{2^k}(2^{i_1} + 2^{i_2} + \cdots + 2^{i_m} + 2^{n+1})-1
\right ) +1\right ) \\  &= \frac{1}{2} \left (
\frac{3^{k+1}(2^{i_1} + 2^{i_2} + \cdots + 2^{i_m} + 2^{n+1})-3 \times
  2^k+2^k}{2^k} \right ) \\  &= \frac{3^{k+1}(2^{i_1} + 2^{i_2} +
  \cdots + 2^{i_m} + 2^{n+1})- 2^{k+1}}{2^{k+1}}  \\ 
 &= \frac{3^{k+1}}{2^{k+1}}(2^{i_1} + 2^{i_2} +
  \cdots + 2^{i_m} + 2^{n+1})- 1.
\end{align*}
Hence proof follows by induction.

\item If the tail of $A$ is $1$, then $A$ is of the form $A = 2^{i_1}
  + \cdots + 2^{i_m} + 1$, where $i_m > 1$. Therefore $A \equiv 1$ mod
  $4$, which implies $3A+1 \equiv 0$ mod $4$.  Therefore $a_1 =
  (3A+1)/2^k$, where $k \geq 2$. Consequently, $a_1 < A$. Therefore,
  the Collatz sequence is decreasing at $A$.

On the other hand, if $a_1=(3A+1)/2^k$, where $k \geq 2$, then $A
\equiv 1$ mod $4$, which implies $n = 0$. Thus $n=0$, if and only if,
the Collatz sequence is decreasing at $A$.
\end{enumerate}
\end{proof}

\begin{exm} {\em 
Let $a_i$ denote the sequence of odd numbers in a Collatz
sequence, where $a_0 = 319$. Since 
$a_0=319 = 2^8 + 2^5 + 2^4 + 2^3 + 2^2 + 2 + 1 $ has tail of length $5$,
by Theorem \ref{tailsthm},
\[ a_i= \frac{3^i}{2^i} (2^8 + 2^6) - 1
\mbox{ for } i = 1, \dots, 5. \]
Thus,
\begin{align*}
a_{1} &= \frac{3}{2} (2^8 + 2^6) - 1 = 479; \\
a_{2} &= \frac{3^2}{2^2} (2^8 + 2^6) - 1 = 719; \\
a_{3} &= \frac{3^3}{2^3} (2^8 + 2^6) - 1 = 1079; \\ 
a_{4} &= \frac{3^4}{2^4} (2^8 + 2^6) - 1 = 1619; \\
 a_{5} &= \frac{3^5}{2^5} (2^8 + 2^6) - 1 = 2429.
\end{align*}
Since
\[  a_{5} = 2429 = 2^{11}+2^8+2^6+2^5+2^4+2^3+2^2+1, \]
 the length of the tail of $a_5$ is zero as expected.

}\end{exm}

Finally we conclude that every rise of a Collatz sequence is followed
by a fall in the next corollary.

\begin{corollary}
A Collatz sequence does not increase monotonically.
\end{corollary}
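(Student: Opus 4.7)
The plan is to read off the corollary as a direct consequence of Theorem \ref{tailsthm}, which already did the hard work. The key observation is that the theorem gives a tight link between the tail length of an odd term and the local behavior of the odd subsequence: tail length $\geq 1$ forces an increase, tail length $0$ forces a decrease, and each increase strictly shortens the tail by exactly one.

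First I would fix an arbitrary odd term $A = a_0$ appearing in the Collatz sequence and let $n$ denote the length of its tail. If $n = 0$, then by part~(2) of Theorem \ref{tailsthm} the sequence decreases at $A$, and there is nothing more to show. Otherwise $n \geq 1$, and part~(1) of Theorem \ref{tailsthm} says that the odd terms $a_1, a_2, \ldots, a_n$ exist, each being the immediate odd successor of the previous one, and the length of the tail of $a_i$ is exactly $n - i$. In particular, $a_n$ has tail length $0$.

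Now I would apply Theorem \ref{tailsthm} a second time, this time to the odd number $a_n$: since its tail length is $0$, the Collatz sequence decreases at $a_n$, meaning the odd successor of $a_n$ is strictly smaller than $a_n$. Therefore, starting from any odd term $A$, the odd subsequence can increase for at most $n$ consecutive steps before a strict decrease occurs. Since every even term is followed after finitely many halvings by an odd term, this rules out any monotonically increasing tail of the full Collatz sequence.

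I do not expect any genuine obstacle here; the argument is just a careful bookkeeping on top of Theorem \ref{tailsthm}. The only point worth stating explicitly is that the increases at $a_0, a_1, \ldots, a_{n-1}$ are automatic from the theorem (since each $a_i$ for $i<n$ still has positive tail length), so the sequence is guaranteed to reach a term of tail length $0$ in finitely many steps and must then fall.
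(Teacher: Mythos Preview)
Your proof is correct and follows essentially the same route as the paper: both invoke Theorem~\ref{tailsthm} to observe that an odd term with tail length $n \geq 1$ leads after exactly $n$ steps to an odd term of tail length $0$, where the sequence must decrease. Your write-up is a bit more explicit (separating the $n=0$ case and noting the reduction from even to odd terms), but the argument is the same.
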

\begin{proof}  Let $n$ denote the length of the tail of $A$.  By
Theorem \ref{tailsthm}, a Collatz sequence increases at $A$ if and
only if the length of the tail of $A$ is $n \geq 1$.  By Theorem
\ref{tailsthm}, the $n$-th odd number, occurring after $A$, say $B$,
has tail of length $0$. Hence the Collatz sequence decreases at $B$.
Therefore a Collatz sequence cannot increase monotonically.  \end{proof}



\section{The Reverse Collatz sequence.} 
\label{reversecollatzsection}

Given an odd number $A$, we know from Section
\ref{equivconsedoddsection} that there is a unique sequence of odd
numbers that can occur before $A$ in a Collatz sequence. We restrict
ourselves to the smallest number of this sequence.  This gives us the
desired uniqueness necessary to get a well defined sequence in the
reverse direction defined as follows.

\begin{defn} Reverse Collatz Sequence: Let $A$ be a positive  integer. 
\[
r_i = \left \{  \begin{array}{llllllll} A  & \mbox{ if $i = 0$; } \\

 \frac{r_{i-1}-1}{3} & \mbox{ if $r_{i-1} \equiv 1$ mod $3$ and $r_{i-1}$ is
  even}; \\ 
2r_{i-1} &  \mbox{ if $r_{i-1} \not \equiv 1$ mod $3$ and
    $r_{i-1}$ is even;} \\ 2r_{i-1} &  \mbox{ if $r_{i-1}$ is odd}.
\end{array}
\right .
\]
\end{defn}

\begin{exm} \label{reveresecollarzexamp} \hfill {\em 
\begin{enumerate}
\item  \label{reversecollega} The reverse Collatz sequence with starting number  $121$ is :
\begin{gather*}
121, 242, 484, 161, 322, 107, 214, 71, 142, 47, 94, 31, 62, 124, 41,
82, 27, 54, 108, 216, \dots
\end{gather*}
Observe that since $27$ is divisible by $3$, all the other numbers
after $27$ are of the form $2^i \times 27$ for some $i$.

\item \label{reversecolleg4ap1} The reverse Collatz sequence with
  starting number $485$ is :
\begin{gather*}
485, 970, 323, 646, 215, 430, 143, 286, 95, 190, 63, 126, \dots
\end{gather*}
\end{enumerate}
}
\end{exm}

 We say that a reverse Collatz sequence {\em converges} if the
 subsequence of odd numbers of the sequence converges to a multiple of
 $3$. Recall that apart from the starting number, a Collatz sequence of
 an odd number cannot contain multiples of $3$ by part
 (\ref{nooddpartomo3}) of Theorem \ref{consecuoddthm}.

\begin{exm} {\em  The Reverse Collatz
 sequence of $121$ (see Example \ref{reveresecollarzexamp}) converges
 because its subsequence of odd numbers
\[
121, 161, 107, 71, 47, 31,  41, 27
\]
converges to $27$. }
\end{exm}

Henceforth, when a reverse Collatz sequence converges to $t$, we will
ignore the terms $2^it$ of the sequence.  

In the next step, we look at some properties of a reverse Collatz
sequence. Let $A$ be an odd number. We say that the reverse Collatz
sequence of $A$ is {\em trivial} if $A$ is the only odd number in its
reverse Collatz sequence.
\begin{proposition} \label{somerevcollprop}
Let $A$ be an odd number.
\begin{enumerate}
\item \label{trivialrevcollpart} If $A$ is divisible by $3$ then the reverse Collatz sequence of $A$ is
 trivial.

\item \label{reversepicompart}
 Let $p_i$ denote the subsequence of odd
  numbers in the reverse Collatz sequence of $A$. If $p_i \equiv 0$
  mod $3$, then $p_{i+1}$ do not exist. Otherwise, $p_{i+1}$ is the
  smallest odd number before $p_i$ in any Collatz sequence and
\begin{ceqn}
\begin{equation} \label{reversecollapieqn}
p_{i+1} = \left \{ \begin{array}{l}
\frac{2p_i-1}{3} \mbox{ if } p_i \equiv 2 \mbox{ mod } 3 \\ \\
\frac{4p_i-1}{3} \mbox{ if } p_i \equiv 1 \mbox{ mod } 3 
\end{array}
\right .
\end{equation}
\end{ceqn}
\item The only odd number that can be a jump from another odd number
  in a reverse Collatz sequence is the starting number of the reverse
  Collatz sequence.

\end{enumerate}
\end{proposition}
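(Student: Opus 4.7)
The plan is to handle the three parts sequentially: part (1) is an immediate unwinding of the definition, part (2) is the heart of the proposition and requires a case analysis on $p_i \bmod 3$, and part (3) follows cleanly from part (2) combined with Lemma \ref{bnthm}.

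For part (1), suppose $A$ is odd with $3 \mid A$. Unwinding the definition, $r_0 = A$ is odd, so $r_1 = 2A$; since $2A$ is even with $2A \equiv 0 \not\equiv 1 \pmod 3$, the doubling branch fires and $r_2 = 4A$. A trivial induction gives $r_i = 2^i A$ for all $i \geq 1$; each such term is even and divisible by $3$, so the reduction branch never triggers and no further odd number ever appears, proving the sequence is trivial.

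For part (2), if $p_i \equiv 0 \pmod 3$ the argument of part (1) applied to $p_i$ shows the sequence continues as $p_i, 2p_i, 4p_i, \ldots$ with no subsequent odd term, so $p_{i+1}$ does not exist. When $p_i \not\equiv 0 \pmod 3$, I would trace the next few iterates of the reverse rule. If $p_i \equiv 2 \pmod 3$, then $2p_i \equiv 1 \pmod 3$ and the reduction immediately gives $(2p_i-1)/3$, which is odd because any integer quotient of an odd number by $3$ is odd. If $p_i \equiv 1 \pmod 3$, then $2p_i \equiv 2 \pmod 3$ forces another doubling to $4p_i \equiv 1 \pmod 3$, whence the reduction yields $(4p_i-1)/3$, again odd. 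These expressions match the formula for $P$ in Theorem \ref{consecuoddthm}, which also identifies this $P$ as the smallest odd integer that can occur consecutively before $p_i$ in any Collatz sequence.

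For part (3), I would argue by contradiction. Suppose $p_{i+1}$ with $i \geq 0$ is a jump of height $n \geq 1$ from some odd $Q \neq p_{i+1}$, so $p_{i+1} = 4^n Q + e_{n-1}$; since $n \geq 1$ this forces $Q < p_{i+1}$. Applying Lemma \ref{bnthm} gives $3p_{i+1}+1 = 4^n(3Q+1)$, so the $2$-adic valuations of the two sides differ by exactly $2n$, and dividing each side by its largest power of $2$ produces the same odd quotient. That odd quotient is the next odd number in the respective Collatz continuations, so both $p_{i+1}$ and $Q$ are immediate odd predecessors of the same $p_i$. But by part (2), $p_{i+1}$ is the smallest such predecessor, contradicting $Q < p_{i+1}$. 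Hence for every $i \geq 0$, $p_{i+1}$ is not a jump of positive height from any odd number, leaving only $p_0$ as the possible starting-number exception.

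The main obstacle is part (3): one must convert the hypothesised jump relation into the statement that $Q$ is an immediate predecessor of $p_i$ in a Collatz sequence, via the valuation computation above, in order to invoke the minimality established in part (2); the parity checks in part (2) are routine but worth recording.
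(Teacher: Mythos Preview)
Your proof is correct and follows essentially the same route as the paper's: all three parts ultimately rest on Theorem~\ref{consecuoddthm}'s identification of the minimal odd predecessor. Where the paper simply cites that theorem, you unwind the reverse-sequence definition explicitly in parts~(1)--(2) and, in part~(3), make the implicit step visible by using Lemma~\ref{bnthm} to show that any purported jump-base $Q$ would itself be a smaller odd predecessor of $p_i$; this is more careful than the paper's one-line appeal to Theorem~\ref{consecuoddthm} but not a different argument.
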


\begin{proof} \hfill
\begin{enumerate}
\item Since there are no odd numbers before $A$ in a Collatz sequence
  by part (\ref{nooddpartomo3}) of Theorem \ref{consecuoddthm}, it
  follows that $A$ is the only odd number in its reverse Collatz
  sequence.

\item  Since $p_{i+1}$  is  the smallest  odd number  before
  $p_i$    in    a     Collatz    sequence,    by    Theorem
  \ref{consecuoddthm}, if $p_i \not  \equiv 0$ mod $3$, then
  $p_{i+1}$ is as  in (\ref{reversecollapieqn}), and if $p_i
  \equiv 0$ mod $3$, then $p_{i+1}$ does not exist,

\item By definition of the reverse Collatz sequence, $p_{i+1}$ are
  smallest possible odd numbers. Hence, by Theorem
  \ref{consecuoddthm}, the only odd number that can be a jump in a
  reverse Collatz sequence is the starting number of the reverse
  Collatz sequence.

\end{enumerate}
\end{proof}

\begin{lemma} \label{multiply2reversetill4thm}
Let $A$ be an odd number and let $p_i$  denote the
subsequence of odd numbers in the reverse Collatz sequence of $A$. Then,
for some  $k \geq 1$,  $p_k \not \equiv 2$ mod
  $3$. 
\end{lemma}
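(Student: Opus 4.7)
The plan is a short proof by contradiction that exploits the fact that the reverse Collatz recursion is strictly decreasing on odd terms that are $\equiv 2$ mod $3$. Suppose, toward contradiction, that $p_i \equiv 2$ mod $3$ for every $i \geq 1$. Since no such $p_i$ is divisible by $3$, part (\ref{reversepicompart}) of Proposition \ref{somerevcollprop} guarantees that $p_{i+1}$ is always defined and is given by
\[
p_{i+1} = \frac{2p_i - 1}{3}.
\]
Consequently, under the hypothesis, the sequence $p_1, p_2, p_3, \dots$ never terminates.

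Next I would verify strict decrease: the inequality $(2p_i - 1)/3 < p_i$ simplifies to $p_i > -1$, which is automatic for positive integers. So under the contradiction hypothesis we obtain $p_1 > p_2 > p_3 > \cdots$, an infinite strictly decreasing sequence of positive integers, which contradicts the well-ordering of $\mathbb{N}$. This forces some $p_k$ with $k \geq 1$ to satisfy $p_k \not\equiv 2$ mod $3$, which is the desired conclusion.

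The main thing to watch is a careful double use of Proposition \ref{somerevcollprop}: I need it both to ensure that $p_{i+1}$ exists (since $p_i \not\equiv 0$ mod $3$ under the working hypothesis) and to supply the closed-form recursion in the residue class $2$ case. Beyond that there is no real obstacle; the argument is just the observation that the map $x \mapsto (2x-1)/3$ strictly shrinks positive integers, so it cannot be iterated infinitely often within the positive integers.
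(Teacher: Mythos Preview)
Your proposal is correct and follows essentially the same approach as the paper: assume for contradiction that every $p_i \equiv 2 \pmod 3$, use Proposition~\ref{somerevcollprop} to get the recursion $p_{i+1} = (2p_i-1)/3$, and observe this forces a strictly decreasing infinite sequence of positive integers. The only cosmetic difference is that the paper finishes by noting the sequence would eventually hit $5$ and then produce $p_{k+1}=3 \not\equiv 2 \pmod 3$, whereas you invoke well-ordering directly.
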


\begin{proof}

If $p_i \equiv 2$ mod $3$, then $p_{i+1} = (2p_{i-1}-1)/3$ by
  part (\ref{reversepicompart}) of Proposition
  \ref{somerevcollprop}. Therefore $p_{i+1} < p_i$.  If all the odd
  terms of the reverse Collatz sequence of $A$ are congruent to $2$
  mod $3$ then the sequence is strictly decreasing. The smallest such
  odd number that such a sequence can reach is $5$ in which case the
  next number in the sequence is $3$, a contradiction. Hence, there is
  a $k$ such that $p_k$ is  not congruent to $2$ mod $3$.
\end{proof}

By  Lemma \ref{multiply2reversetill4thm}, a  reverse Collatz
sequence reaches a  number not congruent to $2$  mod $3$. We
believe but cannot prove that  it reaches a number that is a
multiple of $3$.

\begin{conjecture}
The reverse Collatz sequence converges to a multiple of $3$ for every
number greater than one.
\end{conjecture}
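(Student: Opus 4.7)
The plan is to combine a growth analysis of the odd subsequence $p_i$ with a modular argument on residues modulo powers of $3$. First I would quantify the two cases of the recursion (\ref{reversecollapieqn}): when $p_i \equiv 2 \mbox{ mod } 3$ we have $p_{i+1} = (2p_i-1)/3 < (2/3) p_i$, and when $p_i \equiv 1 \mbox{ mod } 3$ we have $p_{i+1} = (4p_i-1)/3 < (4/3) p_i$. Since the geometric mean $\sqrt{8/9} < 1$, the sequence has a net-shrinking tendency whenever the two residues occur with comparable frequency, so my first subgoal would be to show that both residues occur with roughly equal frequency along any non-terminating trajectory.

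Second, I would refine Lemma \ref{multiply2reversetill4thm} by studying the induced dynamics on $\mathbb{Z}/9\mathbb{Z}$ and, more generally, on $\mathbb{Z}/3^k\mathbb{Z}$. A direct calculation shows, for example, that if $p \equiv 5 \mbox{ mod } 9$ then $(2p-1)/3$ is already a multiple of $3$ and the sequence terminates; similarly $p \equiv 7 \mbox{ mod } 9$ terminates via the $(4p-1)/3$ branch. If one can show that every surviving odd term has a uniformly positive probability (at least $1/3$, say, if all non-zero residues mod $9$ are equally represented) of hitting $0 \mbox{ mod } 3$ at the next step, then the chance of surviving $k$ steps decays geometrically and convergence follows.

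In parallel I would try to rule out non-trivial cycles. If $p_0 = p_N$ with $a$ steps of type $(2p-1)/3$ and $b = N-a$ of type $(4p-1)/3$, iterating the recursion yields an equation of the shape $3^N p_0 = 2^{a+2b} p_0 - C$, where $C$ is an explicit integer combination of powers of $2$. Comparing $2$-adic and $3$-adic valuations of the two sides should force $p_0$ to be divisible by $3$, which is the starting case already excluded from the conjecture.

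The main obstacle is that the recursion (\ref{reversecollapieqn}) is essentially a close cousin of the Syracuse map underlying the Collatz problem itself --- the multiplicative factors $2/3$ and $4/3$ play the role of Collatz's $1/2$ and $3/2$, and indeed any reverse Collatz cycle would give rise to a forward Collatz cycle among odd numbers. Controlling individual trajectories, rather than statistical averages, is precisely what resists all known techniques for Collatz. A realistic partial result would therefore be to establish the conjecture for a set of starting values of natural density $1$, in the spirit of Tao's recent work on Collatz, by formalising the random-walk model on residues modulo $3^k$; a full proof appears to require genuinely new ideas.
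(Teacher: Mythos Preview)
The paper does not prove this statement: it is presented explicitly as a \emph{conjecture}, preceded by the sentence ``We believe but cannot prove that it reaches a number that is a multiple of $3$.'' There is therefore no proof in the paper to compare your proposal against.

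Your proposal is not a proof either, and to your credit you say so in the final paragraph. What you have written is a sensible research outline: the heuristic that the geometric mean $\sqrt{8/9}<1$ suggests net contraction, the observation that certain residues mod $9$ (namely $5$ and $7$) terminate in one step, and the recognition that the map (\ref{reversecollapieqn}) is essentially the inverse Syracuse map, so that a non-trivial reverse cycle would yield a non-trivial forward Collatz cycle --- all of this is correct and well observed. The one place where your sketch overreaches is the cycle argument: from $3^N p_0 = 2^{a+2b} p_0 - C$ you assert that comparing $2$-adic and $3$-adic valuations ``should force $p_0$ to be divisible by $3$,'' but there is no evident mechanism for this; the equation only gives $p_0 = C/(2^{a+2b}-3^N)$, and ruling out integer solutions is precisely the open cycle problem for Collatz.

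In short, the paper offers no argument beyond Lemma \ref{multiply2reversetill4thm}, and your outline goes well beyond what the paper attempts while correctly diagnosing why a complete proof is out of reach.
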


The reverse Collatz sequence of a number $A$ is unique and hence it
provides for the definition of a unique {\em complete Collatz
  sequence} of $A$, namely the union of the reverse Collatz sequence
of $A$ and the Collatz sequence of $A$.
\begin{exm} {\em 
The complete Collatz sequence of $485$ is 
\begin{gather*}
63, 190, 95, 286, 143, 430, 215, 646, 323, 970, \underline{{\bf 485}},
1456, 728, 364, 182, 91, 274, 137, 412, 206, 103, \\ 310, 155, 466,
233, 700, 350, 175, 526, 263, 790, 395, 1186, 593, 1780, 890, 445,
1336, 668, 334, 167, \\ 502, 251, 754, 377, 1132, 566, 283, 850, 425,
1276, 638, 319, 958, 479, 1438, 719, 2158,,1079, 3238, \\ 1619, 4858,
2429, 7288, 3644, 1822, 911, 2734, 1367, 4102, 2051, 6154, 3077, 9232,
4616, 2308, 1154,\\ 577, 1732, 866, 433, 1300, 650, 325, 976, 488,
244, 122, 61, 184, 92, 46, 23, 70, 35, 106, 53, 160, 80, 40, \\ 20,
10, 5, 16, 8, 4, 2, 1.
\end{gather*}
 }\end{exm}

\end{document}